\documentclass[12pt,reqno]{amsart}
\usepackage[a4paper,margin=2.5cm,top=2.5cm,bottom=2.5cm,centering,headheight=3ex,headsep=4ex,vcentering]{geometry}

\usepackage{amssymb,amsfonts,amsmath,amsthm,booktabs}
\usepackage{orcidlink,tikz}

\usepackage[cal=cm,scr=euler]{mathalfa}
\usepackage{microtype}
\usepackage{mlmodern}

\theoremstyle{plain}
\newtheorem{theorem}{Theorem}[section]

\newtheorem{lemma}[theorem]{Lemma}

\theoremstyle{definition}

\newtheorem{remark}[theorem]{Remark}
\theoremstyle{plain}

\theoremstyle{definition}

\newcommand{\black}{\operatorname{black}}
\newcommand{\white}{\operatorname{white}}

\newcommand{\col}{\operatorname{col}}
\newcommand{\Nar}{N}

\newcommand{\Raney}{R}
\newcommand{\Z}{\mathbb{Z}}

\newcommand{\ceil}[1]{\left\lceil #1 \right\rceil}

\title[Dyck paths on colored lattices]{Dyck paths on colored lattices}

\author[M. P. Saikia]{Manjil P. Saikia\,\orcidlink{0000-0002-2997-6731}}
\address[M. P. Saikia]{Mathematical and Physical Sciences division, School of Arts \& Sciences, Ahmedabad University, Navrangpura, Ahmedabad 380009, Gujarat, India}
\email{manjil.saikia@ahduni.edu.in}

\date{Version of July 7, 2026}
\thanks{Claude Opus 4.8 and Claude Sonnet 5 were used to edit the note, and make notations more efficient. The author takes responsibility for all of the mathematical content.}

\keywords{Dyck paths, Catalan numbers, Raney numbers, cycle lemma.}

\subjclass[2020]{05A15, 05A19}

\begin{document}

\begin{abstract}
Fried recently enumerated Dyck paths having equally many black and white cells
below them, for the chessboard coloring (Narayana numbers) and the
column-alternating coloring (Fuss--Catalan numbers). We prove a
generalization here: for the coloring of columns modulo any $c\ge2$, the
number of Dyck paths of semilength $n$ whose $c$ residue classes carry equal
weight is the Raney number $\Raney_{c+1,r}(m)$, where $n=cm+r-1$.
\end{abstract}
\maketitle
\section{Introduction}\label{sec:intro}

A \emph{Dyck path} of semilength $n$ is a lattice path from $(0,0)$ to $(n,n)$
with unit east (E) and north (N) steps that never rises above the diagonal
$y=x$ (see Figure \ref{f11} for an example). Following Fried~\cite{Fried}, for $i\in\{1,\dots,n\}$ let $p_i$ be the
number of cells in the $i$th column below the path, equivalently the number of
N-steps taken before the $i$th E-step. Then it is easy to see that
\[
0=p_1\le p_2\le\cdots\le p_n \quad \text{and} \qquad p_i\le i-1.
\]
These column-height vectors are in bijection with Dyck paths, which are
counted by the famous Catalan number $C_n=\dfrac1{n+1}\dbinom{2n}{n}$ (see \cite[A000108]{OEIS} for several interpretations and other connections). 

\begin{figure}[htb!]
\centering
\scalebox{0.4}{%
\begin{tikzpicture}
\tikzset{DyckPath/.style={black,line width=4pt,line cap=round}}
\newcommand{\GridThree}{%
  \draw[very thin] (0,0) grid (3,3);}
\begin{scope}[shift={(0.0,0.0)}]    
  \GridThree
  \draw[DyckPath] (0,0)--(1,0)--(2,0)--(3,0)--(3,1)--(3,2)--(3,3);
\end{scope}
\begin{scope}[shift={(4.2,0.0)}]    
  \GridThree
  \draw[DyckPath] (0,0)--(1,0)--(2,0)--(2,1)--(3,1)--(3,2)--(3,3);
\end{scope}
\begin{scope}[shift={(8.4,0.0)}]     
  \GridThree
  \draw[DyckPath] (0,0)--(1,0)--(2,0)--(2,1)--(2,2)--(3,2)--(3,3);
\end{scope}
\begin{scope}[shift={(12.6,0.0)}]    
  \GridThree
  \draw[DyckPath] (0,0)--(1,0)--(1,1)--(2,1)--(3,1)--(3,2)--(3,3);
\end{scope}
\begin{scope}[shift={(16.8,0.0)}]    
  \GridThree
  \draw[DyckPath] (0,0)--(1,0)--(1,1)--(2,1)--(2,2)--(3,2)--(3,3);
\end{scope}
\end{tikzpicture}}
\caption{The five Dyck paths of semilength \(3\).}\label{f11}
\end{figure}

We now color the cells of the integer grid black/white in one of two ways (see Figure \ref{ff22}):
\begin{itemize}
\item \textbf{Chessboard:} cell $(i,j)$ is black iff $i+j$ is even.
\item \textbf{Column-alternating:} column $j$ is black iff $j$ is odd.
\end{itemize}
Let $\black(p)$ and $\white(p)$ be the numbers of black and white cells below
$p$, and set $D(p)=\black(p)-\white(p)$. Call $p$ \emph{balanced} if $D(p)=0$.
Fried~\cite{Fried} proved that the number of chessboard-balanced Dyck paths of
semilength $n$ is the Narayana number $\Nar(n,\ceil{n/2})$, where
$\Nar(n,k)=\dfrac1n\dbinom nk\dbinom n{k-1}$, and that the number of
column-alternating-balanced Dyck paths is
$\dfrac1{2m+1}\dbinom{3m}{m}$ for $n=2m$ and $\dfrac1{m+1}\dbinom{3m+1}{m}$ for
$n=2m+1$, that is, the Fuss--Catalan sequences
\href{https://oeis.org/A001764}{A001764} and
\href{https://oeis.org/A006013}{A006013}.

\begin{figure}[htb!]
\centering
\scalebox{0.4}{%
\begin{tikzpicture}
\newcommand{\ChessFour}{
  \foreach \i in {0,...,3}{
    \foreach \j in {0,...,3}{\pgfmathtruncatemacro\p{mod(\i+\j,2)}
      \ifnum\p=0 \fill[black!20] (\i,\j) rectangle ++(1,1);\fi}}
  \draw[very thin] (0,0) grid (4,4);
}
\newcommand{\ColsFour}{
  \foreach \i in {0,...,3}{
    \ifnum\i=0 \foreach \j in {0,...,3}{\fill[black!20] (\i,\j) rectangle ++(1,1);} \fi
    \ifnum\i=2 \foreach \j in {0,...,3}{\fill[black!20] (\i,\j) rectangle ++(1,1);} \fi}
  \draw[very thin] (0,0) grid (4,4);
}
\begin{scope}[shift={(0,0)}]
  \ChessFour
  \node[font=\sffamily\Huge] at (2,-1.4) {(a) Chessboard coloring.};
\end{scope}
\begin{scope}[shift={(12.0,0)}] 
  \ColsFour
  \node[font=\sffamily\Huge] at (2,-1.4) {(b) Column-alternating coloring.};
\end{scope}
\end{tikzpicture}}
\caption{The two colorings.}\label{ff22}
\end{figure}

Our main result generalizes the column-alternating theorem to colorings
of any period. We formally state Fried's results first.
\begin{theorem}\cite[Theorem 1]{Fried}\label{thm:fried1}
For the chessboard coloring,
$D(p)=\sum_{i=1}^n(-1)^{i-1}\mathbf 1_{p_i\text{ odd}}$. Moreover, if $k$
denotes the number of N-steps of $p$ occurring at odd positions, then
$D(p)=\ceil{n/2}-k$, and consequently the number of chessboard-balanced Dyck
paths of semilength $n$ is $\Nar(n,\ceil{n/2})$. Here $\mathbf 1$ is the indicator function.
\end{theorem}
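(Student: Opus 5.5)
The plan is to compute $D(p)$ column by column, then translate the column formula into a statistic on the step word of $p$, and finally use a known Narayana equidistribution for the count.

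\emph{Step 1: the column formula.} Column $i$ contains exactly $p_i$ cells below $p$, stacked in consecutive rows $j=1,\dots,p_i$. Going up the column, the colors alternate. So the contribution of column $i$ to $\black-\white$ is a partial alternating sum $\pm(1-1+1-\cdots)$ with $p_i$ terms.
\begin{itemize}
\item If $p_i$ is even, the contribution is $0$.
\item If $p_i$ is odd, the contribution is the sign of the bottom cell $(i,1)$. That cell is black iff $i+1$ is even, that is, iff $i$ is odd, so the sign is $(-1)^{i-1}$.
\end{itemize}
Summing over $i$ gives $D(p)=\sum_i(-1)^{i-1}\mathbf 1_{p_i\text{ odd}}$. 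Shifting the coordinate convention (lower-left corner versus cell index) changes $i+j$ by an even amount, so the formula does not depend on that choice.

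\emph{Step 2: the step-word statistic.} Write $p$ as a word of length $2n$. The $i$th E-step sits at position $i+p_i$, because exactly $p_i$ N-steps precede it. Hence that E-step is at an odd position iff $i$ and $p_i$ have opposite parity. Split according to the parity of $i$, using that there are $\ceil{n/2}$ odd indices $i$:
\[
\#\{\text{E at odd positions}\}=\Bigl(\ceil{n/2}-\#\{i\text{ odd},\,p_i\text{ odd}\}\Bigr)+\#\{i\text{ even},\,p_i\text{ odd}\}=\ceil{n/2}-D(p).
\]
Since there are $n$ odd positions in total, the number of N-steps at odd positions is determined by this quantity. So $D(p)=\ceil{n/2}-k$, where $k$ counts steps of the appropriate letter at odd positions. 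Here I must be careful with the convention. By the display, the quantity equal to $\ceil{n/2}-D(p)$ is literally the number of E-steps at odd positions. To match the statement's wording with N-steps, I would either reverse the path or use the position-parity convention that makes the two agree. Pinning down this bookkeeping, so that the final index is $\ceil{n/2}$ rather than $\lfloor n/2\rfloor$, is the main place where I expect to have to recheck definitions.

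\emph{Step 3: the count.} Balanced paths are exactly those with $k=\ceil{n/2}$. It remains to show that the number of Dyck paths of semilength $n$ with $k$ steps of the given type at odd positions is $\Nar(n,k)$. I would prove this by grouping the word into consecutive pairs $(2j-1,2j)$. Each pair is EE, EN, NE or NN, and the statistic counts the pairs of a certain shape.

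I would then either invoke the known equidistribution of this statistic with the number of peaks (a result of Deutsch type), or give a direct proof. For the direct proof, I would encode the paired word as a path on a coarser lattice and derive the generating function. It should satisfy the Narayana functional equation $F=1+xF(1+(t-1)xF)$, or an equivalent form, and Lagrange inversion then yields $\frac1n\binom nk\binom n{k-1}$. The symmetry $\Nar(n,k)=\Nar(n,n+1-k)$ can be used to absorb whichever of the two conventions from Step 2 turns out to be the natural one.

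I expect Step 3 to be the main obstacle, namely getting the pair decomposition to yield exactly the Narayana equation. Steps 1 and 2 are routine parity bookkeeping.
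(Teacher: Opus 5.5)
The paper quotes this theorem from Fried and gives no proof, so I am judging your proposal on its own.

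\textbf{Steps~1 and~2.} These are correct, and your suspicion in Step~2 is justified. With this paper's conventions (E-first paths weakly below $y=x$, positions numbered from $(0,0)$), the identity $D(p)=\ceil{n/2}-k$ holds with $k$ the number of \emph{E}-steps at odd positions. The N-step wording is simply false here: for $n=1$, the path EN has $D=0$ and no N-step at an odd position. The N-step form is what you get after reflecting the path in the diagonal, which swaps the letters. So state and prove the E-version rather than looking for a parity convention that rescues the N-version. Note also that the number of paths with $k$ N-steps at odd positions is $\Nar(n,k+1)$, not $\Nar(n,k)$. The symmetry of $\Nar$ therefore repairs only the final balanced count, not your intermediate claim.

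\textbf{Step~3 is the genuine gap.} It carries all the enumerative content and is not carried out. An unspecified ``Deutsch-type'' equidistribution is not a proof. The target equation is also wrong: your $F=1+xF(1+(t-1)xF)$ expands to $F=1+xF+(t-1)x^2F^2$, which gives $[x^3]F=3t-2$, so it is not a counting series. The Narayana (peak) equation is $F=1+xF(F-1+t)$.

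Your pairing idea does reach it, once you add the missing observation about heights.
\begin{itemize}
\item After each pair, $h=(x-y)/2$ is a nonnegative integer.
\item EE and NN change $h$ by $\pm1$; EN keeps it; NE keeps it but is allowed only when $h\ge1$.
\item So Dyck paths of semilength $n$ are Motzkin paths of length $n$ with two kinds of flat step, only EN being allowed at height $0$, and $k$ counts the pairs beginning with E.
\end{itemize}
Weight EE and EN by $xt$, and NN and NE by $x$. Let $G$ count such paths with both flat steps allowed at every height. The part between an initial EE and its matching NN lies at height $\ge1$, so first-step decompositions give
\[
G=1+x(1+t)G+x^2tG^2=(1+xG)(1+txG),\qquad F=1+xtF+x^2tGF .
\]
Hence $F=G/(1+xG)$, and eliminating $G$ yields $F=1+xF(F-1+t)$. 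Alternatively, set $y=xG$, so that $y=x(1+y)(1+ty)$. Lagrange inversion then gives
\[
[x^n]F=[x^{n+1}]\frac{y}{1+y}=\frac1{n+1}[y^n](1+y)^{n-1}(1+ty)^{n+1},
\]
whose coefficient of $t^k$ is $\frac1{n+1}\binom{n+1}{k}\binom{n-1}{k-1}=\Nar(n,k)$.
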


\begin{theorem}\cite[Theorem 2]{Fried}\label{thm:fried2}
For the column-alternating coloring, $D(p)=\sum_{i=1}^n(-1)^{i-1}p_i$. The number of black-white balanced Dyck paths of semilength $n$ is given by
\[
\begin{cases}
\dfrac{1}{2m+1}\dbinom{3m}{m},&\textnormal{if $n=2m$},\\
\dfrac{1}{m+1}\dbinom{3m+1}{m},&\textnormal{if $n=2m+1$}.
\end{cases}
\]
\end{theorem}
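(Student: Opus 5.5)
The plan is to prove the formula for $D(p)$ first, then use monotonicity of $(p_i)$ to regroup the alternating sum into terms of constant sign. Balancedness then forces certain consecutive column heights to coincide, and the resulting constrained paths are counted with the cycle lemma (Raney numbers).

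\textbf{Step 1 (the formula for $D$).} Columns are indexed $1,\dots,n$ and column $i$ is black exactly when $i$ is odd, as in Figure~\ref{ff22}(b). Column $i$ contributes $p_i$ cells below the path. Hence
\[
D(p)=\sum_{i\text{ odd}}p_i-\sum_{i\text{ even}}p_i=\sum_{i=1}^n(-1)^{i-1}p_i .
\]

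\textbf{Step 2 (rigidity of balanced paths).} For $n=2m$ we group
\[
D(p)=\sum_{j=1}^{m}(p_{2j-1}-p_{2j}).
\]
Since $p$ is nondecreasing, every summand is $\le 0$. So $D(p)=0$ if and only if $p_{2j-1}=p_{2j}$ for all $j$. For $n=2m+1$ we group the other way:
\[
D(p)=p_1+\sum_{j=1}^{m}(p_{2j+1}-p_{2j}).
\]
Here $p_1=0$ and every summand is $\ge0$. So $D(p)=0$ if and only if $p_{2j}=p_{2j+1}$ for all $j$. In words, balanced means that no N-step occurs between E-steps $2j-1$ and $2j$ (even case), or between E-steps $2j$ and $2j+1$ (odd case). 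The E-steps therefore come glued in pairs, with a single leading E in the odd case.

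\textbf{Step 3 (encoding and counting).} Encode a glued pair EE as a letter $A$ of weight $+2$, the single leading E as weight $+1$, and each N as weight $-1$.

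In the even case, a balanced path is a word with $m$ letters $A$ and $2m$ letters N. The diagonal constraint $p_{2j-1}\le 2j-2$ says that before the $j$th $A$ the number of N's is at most $2(j-1)$. Since prefix sums drop only at N-steps, this is equivalent to all prefix sums being $\ge0$, with total $0$.

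Appending one extra $-1$ gives words with $m$ steps $+2$ and $2m+1$ steps $-1$, of total $-1$. By the cycle lemma, exactly one of the $3m+1$ rotations of each such word has all proper prefix sums nonnegative; these words have no nontrivial rotational symmetry because the total is $-1$. This yields
\[
\frac1{3m+1}\binom{3m+1}{m}=\frac1{2m+1}\binom{3m}{m}.
\]

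In the odd case the path starts with the forced single E, so we count nonnegative paths that start at height $1$ and use $m$ steps $+2$ and $2m+1$ steps $-1$, ending at $0$. The diagonal condition $p_{2j}\le 2j-1$ again translates to nonnegativity of the prefix sums. Appending a final $-1$ and applying the generalized cycle lemma (total $-2$, two good rotations) gives the Raney number
\[
R_{3,2}(m)=\frac{2}{3m+2}\binom{3m+2}{m}=\frac1{m+1}\binom{3m+1}{m}.
\]

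\textbf{Main obstacle.} The delicate step is the exact translation of the diagonal constraint $p_i\le i-1$ into prefix-sum nonnegativity after the gluing, in particular checking the boundary case of the last block. The step I would check most carefully is the odd case, where the starting height is $1$ and the cycle lemma must be used in its "total $-2$, two good rotations" form rather than the basic one.
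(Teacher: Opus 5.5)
Your proof is correct and is the paper's argument specialized to $c=2$: the regrouping in Step 2 is Lemma~\ref{lem:structure} (blocks $\{2j-1,2j\}$, or $B_0=\{1\}$ followed by blocks $\{2j,2j+1\}$), and your weighted words with an appended $-1$ are the paper's lattice paths weakly below $y=2x+s$ with an extra $N$-step, counted by the cycle lemma with $k=2$. One small point: in the odd case, words of total $-2$ can be periodic, so going from ``two good rotations'' to the count is cleanest via the paper's double count of pairs (word, good shift), which gives $\frac{2}{3m+2}\binom{3m+2}{m}$ regardless.
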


We color column $j$ by $j\bmod c$ and call $p$ \emph{$c$-balanced} if all $c$
residue classes carry equally many cells below $p$. Throughout this note we
write $n=cm+s$ with $s=n\bmod c$ and set $r=s+1\in\{1,\dots,c\}$. Since the number of cells
below $p$ in column $i$ is $p_i$, the number of cells of color $j$ below $p$
is the \emph{class sum}
\[
S_j \;=\; \sum_{\substack{1\le i\le n\\ i\equiv j \pmod{c}}} p_i,
\qquad j\in\Z/c\Z,
\]
and we call $p$ \emph{$c$-balanced} if $S_0=S_1=\cdots=S_{c-1}$.

\begin{theorem}[Generalization of Theorem~\ref{thm:fried2}]\label{thm:raney}
The number of $c$-balanced Dyck paths of semilength $n$ is the Raney number\footnote{Also called the two-parameter Fuss-Catalan number.}
\[
\Raney_{c+1,\,r}(m)\;=\;\dfrac{r}{(c+1)m+r}\dbinom{(c+1)m+r}{m}.
\]
\end{theorem}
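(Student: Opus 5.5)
The plan is to show that $c$-balance forces a rigid block structure on the column-height vector, which reduces the count to a standard ballot-type enumeration. That enumeration is then finished with the cycle lemma.

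\textbf{Step 1 (block decomposition).} Write $n=cm+s$ and split the columns into a leftover initial segment $L=\{1,\dots,s\}$ followed by $m$ consecutive blocks
\[
B_k=\{s+(k-1)c+1,\dots,s+kc\},\qquad k=1,\dots,m.
\]
Each block contains exactly one column of each residue class mod $c$. The residue of the $t$-th position of a block ($1\le t\le c$) is independent of $k$. Moreover, the residues of the leftover columns $1,\dots,s$ coincide with those of the last $s$ positions of each block, since $i\equiv i+c$ and $c+1,\dots,c+s$ are the last $s$ columns of $B_1$. Let $j_t$ denote the class at position $t$. Then
\[
S_{j_t}=\sum_{k=1}^m p_{s+(k-1)c+t}\;+\;\varepsilon_t,
\]
where $\varepsilon_t=0$ for $t\le c-s$ and $\varepsilon_t=p_{t-(c-s)}\ge 0$ for $t>c-s$.

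\textbf{Step 2 (balance $\Leftrightarrow$ rigid shape).} By monotonicity $p_1\le\cdots\le p_n$, the block sums are weakly increasing in $t$. The extra terms appear only at the larger positions $t>c-s$. Hence $S_{j_1}\le S_{j_2}\le\cdots\le S_{j_c}$. For the last step, compare $t=c-s$ with $t=c-s+1$: the block part is weakly larger and $\varepsilon\ge0$. Equality of all $S_j$ therefore forces every inequality to be tight. I expect this bookkeeping to be the main obstacle, and I would verify it carefully on small cases such as $c=2$, $n=3$, where the balanced vectors are $(0,0,0)$ and $(0,1,1)$. The conclusions are:

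\begin{itemize}
\item $p$ is constant on each block $B_k$, say equal to $q_k$;
\item $p_i=0$ for $i\le s$, which is automatic for $i=1$.
\end{itemize}

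We must double-check that tightness of the chain really forces constancy inside every block, not merely equality of sums. This holds because each comparison is a sum over $k$ of termwise inequalities $p_{s+(k-1)c+t}\le p_{s+(k-1)c+t+1}$. Conversely, any such vector is balanced, since every class then sums to $\sum_k q_k$.

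\textbf{Step 3 (counting).} The Dyck constraints become
\[
0\le q_1\le q_2\le\cdots\le q_m,\qquad q_k\le s+(k-1)c=(r-1)+(k-1)c,
\]
where the bound comes from the first column of $B_k$. The constraints $p_i\le i-1$ on the other columns and on $L$ are then implied. Such sequences correspond to lattice paths with $m$ up-steps of size $c$ (jumps in the allowed height) and unit steps that stay weakly on one side of a line of slope $c$ starting at offset $r-1$. Equivalently, they correspond to words with $m$ letters of weight $+c$ and $cm+r$ letters of weight $-1$ whose partial sums stay positive in the appropriate sense. The Dvoretzky--Motzkin cycle lemma counts these: among the cyclic rotations of each word of total weight $-r$, exactly $r$ are good, giving
\[
\frac{r}{(c+1)m+r}\binom{(c+1)m+r}{m}=\Raney_{c+1,r}(m).
\]
As a sanity check, the case $c=2$ recovers Theorem~\ref{thm:fried2}: $r=1$ gives $\frac{1}{3m+1}\binom{3m+1}{m}=\frac1{2m+1}\binom{3m}{m}$, and $r=2$ gives $\frac{2}{3m+2}\binom{3m+2}{m}=\frac1{m+1}\binom{3m+1}{m}$.
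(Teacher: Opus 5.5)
Your proposal is correct and follows essentially the same route as the paper. It uses the same block decomposition, and your monotone chain $S_{j_1}\le\cdots\le S_{j_c}$ (block part and $\varepsilon$ part each weakly increasing, since $\varepsilon$ runs through $0,\dots,0,p_1,\dots,p_s$) forces constancy on blocks and $p_1=\cdots=p_s=0$. You then reduce to $0\le q_1\le\cdots\le q_m$ with $q_k\le s+(k-1)c$ and apply the cycle lemma to words with $m$ letters of weight $c$ and $cm+r$ unit letters, exactly as the paper does.

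You are vaguer than the paper only about the ``appropriate sense'' of positivity in Step 3 and the final division. The paper makes the positivity precise by appending one extra $N$-step and reversing the word, so that the prefix condition $y\le cx+s$ becomes exactly strict $c$-dominance. It then double counts pairs (word, good rotation) to pass from ``$r$ good rotations per word'' to $\frac{r}{(c+1)m+r}\binom{(c+1)m+r}{m}$, which also covers periodic words.
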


The $c=2$ case is Theorem~\ref{thm:fried2}: $r=1$ ($n$ even) gives
\href{https://oeis.org/A001764}{A001764} and $r=2$ ($n$ odd) gives
\href{https://oeis.org/A006013}{A006013}. For $c=3$, we have
\[
c=3:\quad
\underbrace{\tfrac{1}{3m+1}\tbinom{4m}{m}}_{n\equiv0 \pmod 3},\quad
\underbrace{\tfrac{1}{2m+1}\tbinom{4m+2}{m}}_{n\equiv1 \pmod 3},\quad
\underbrace{\tfrac{3}{4m+3}\tbinom{4m+3}{m}}_{n\equiv2 \pmod 3},
\]
the first of which is \href{https://oeis.org/A002293}{A002293}, the second is \href{https://oeis.org/A069271}{A069271}, while the third is \href{https://oeis.org/A006632}{A006632}. 

We give a proof of this result in the next section.

\section{Proof of Theorem \ref{thm:raney}}\label{sec:raney}

We number the columns from left to right, starting with $1$ and going till $n$. We partition the columns into sets defined by $B_0=\{1,\dots,s\}$ and, for $t=1,\dots,m$,
$B_t=\{s+(t-1)c+1,\dots,s+tc\}$, and write $\col(t,a)=s+(t-1)c+a$
for the $a$th column of $B_t$, $1\le a\le c$. We call the $B_i$'s \textit{blocks}. Also, let $u_i=p_{i+1}-p_i\ge0$ for the gap sequence.

\begin{lemma}\label{lem:structure}
A Dyck path $p$ is $c$-balanced if and only if $p_i=0$ for all $i\in B_0$ and
$p$ is constant on each block $B_t$, $1\le t\le m$. Equivalently, the gaps
satisfy $u_k=0$ for all $k\not\equiv s\pmod c$.
\end{lemma}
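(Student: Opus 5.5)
The plan is to compare the class sums termwise by matching columns across blocks. The key observation is that each block $B_t$ with $1\le t\le m$ consists of $c$ consecutive columns and so contains each residue class modulo $c$ exactly once. Moreover, the column $\col(t,a)$ has residue $s+a \pmod c$, independently of $t$. The initial block $B_0=\{1,\dots,s\}$ contains the residues $1,\dots,s$ once each; in terms of the position index $a$, these are the residues $s+a$ with $a\in\{c-s+1,\dots,c\}$, realized by the column $a-c+s\in B_0$.

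So I would re-index the class sums by position within a block. For $1\le a\le c$ set
\[
T_a=\sum_{t=1}^m p_{\col(t,a)} + \mathbf 1_{a>c-s}\, p_{a-c+s},
\]
so that $T_a=S_{s+a \bmod c}$, and $\{T_1,\dots,T_c\}$ is exactly the multiset of class sums.

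Next I would show $T_1\le T_2\le\cdots\le T_c$, with the inequality holding term by term. Since $p$ is nondecreasing, $p_{\col(t,a)}\le p_{\col(t,a+1)}$ for every block $t$. For the $B_0$ contributions there are three cases:
\begin{itemize}
\item if $a+1\le c-s$, neither $T_a$ nor $T_{a+1}$ has a $B_0$ term;
\item if $a=c-s$, only $T_{a+1}$ gains a term, namely $p_1=0$, so nothing changes;
\item if $a>c-s$, both have a $B_0$ term, and $p_{a-c+s}\le p_{a+1-c+s}$ by monotonicity.
\end{itemize}
Hence $p$ is $c$-balanced if and only if $T_1=T_c$. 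Since the chain is termwise, this happens if and only if every termwise inequality in the chain is an equality. That gives $p_{\col(t,1)}=\cdots=p_{\col(t,c)}$ for each $t$, so $p$ is constant on each $B_t$. It also gives $p_1=p_2=\cdots=p_s$, and since $p_1=0$, $p$ vanishes on $B_0$. The converse direction is immediate: if $p$ is constant on blocks and zero on $B_0$, then every $T_a$ equals $\sum_t p_{\col(t,1)}$, so all class sums agree.

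Finally, the gap reformulation follows by translation. The condition "$p$ is zero on $B_0$ and constant on each $B_t$" says exactly that $u_k=0$ whenever $k$ and $k+1$ lie in the same block; here $p_1=0$ handles $B_0$. The only remaining indices are the block boundaries $k=s+tc$, that is, $k\equiv s\pmod c$.

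The main obstacle is bookkeeping rather than any real difficulty: I must check carefully that the $B_0$ columns attach to the positions $a>c-s$ in an order-preserving way, so that the chain of inequalities is genuinely termwise. The degenerate case $s=0$, where $B_0$ is empty, must also be covered.
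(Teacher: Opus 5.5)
Your proof is correct and takes essentially the paper's route. You order the class sums by position $a$ within a block, use monotonicity of $p$ to get a nondecreasing chain whose increments are sums of gaps, and conclude that balance forces those gaps to vanish. The differences are cosmetic: you fold the $B_0$ contributions into $T_a$ so that one chain $S_{\rho(1)}\le\cdots\le S_{\rho(c)}$ does the work, whereas the paper keeps block sums and $\varepsilon_a$ separate and plays the nonnegative $T_{a+1}-T_a$ against the nonpositive $\varepsilon_a-\varepsilon_{a+1}$. You also write out the converse and the gap reformulation, which the paper leaves implicit.
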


\begin{proof}
The reverse direction is pretty straightforward, we only prove the forward direction.

For $a\in\{1,\dots,c\}$ set
$T_a=\sum_{t=1}^m p_{\col(t,a)}$. Since
$\col(t,a)+1=\col(t,a+1)$ for $a<c$, we have
\begin{equation}\label{eq:Tmono}
T_{a+1}-T_a=\sum_{t=1}^m(p_{\col(t,a+1)}-p_{\col(a,t)})=\sum_{t=1}^m u_{\col(t,a)}\;\ge\;0,
\end{equation}
so $T_1\le T_2\le\cdots\le T_c$, with equality at a step if and only if every
gap at position $a$ vanishes.

We next record how the residue classes decompose into blocks. Each block
$B_t$, $1\le t\le m$, consists of $c$ consecutive columns, so it contains
exactly one column from each residue class modulo $c$: its $a$-th column
$\col(t,a)=s+(t-1)c+a$ has residue
\[
\rho(a)\;\equiv\; s+a \pmod c,
\]
independently of $t$, and $a\mapsto\rho(a)$ is a bijection from
$\{1,\dots,c\}$ onto $\Z/c\Z$. Hence the class sum $S_{\rho(a)}$ receives the
contribution $p_{\col(t,a)}$ from every block, for a total of $T_a$, plus
whatever the initial segment $B_0$ contributes.

The columns of $B_0=\{1,\dots,s\}$ have residues $1,\dots,s$. To see which positions $a$
these residues correspond to, we read off $\rho(a)\equiv s+a$ as $a$ increases:
\begin{itemize}
\item for $a=1,\dots,c-s$, the values $s+a$ run over $s+1,\dots,c$, that is,
over the residues $s+1,\dots,c-1,0$, none of which meets $B_0$, and
\item for $a=c-s+1,\dots,c$, the values $s+a$ run over $c+1,\dots,c+s$,
which reduce to the residues $1,\dots,s$; explicitly
$\rho(a)=a-(c-s)$, and the unique column of $B_0$ in this class is the
column with that particular index, contributing its height $p_{\,a-(c-s)}$.
\end{itemize}
Therefore
\[
S_{\rho(a)}=T_a+\varepsilon_a,\qquad
\varepsilon_a=
\begin{cases}
0, & 1\le a\le c-s,\\[2pt]
p_{\,a-(c-s)}, & c-s+1\le a\le c.
\end{cases}
\]

Assume now that $p$ is balanced, so $T_a+\varepsilon_a$ is constant in $a$.
For $a=1,\dots,c-s+1$ we have $\varepsilon_a=0$, so
$T_1=\cdots=T_{c-s+1}$ and, by \eqref{eq:Tmono}, every gap
$u_{\col(t,a)}$ with $a\le c-s$ vanishes. For
$a=c-s+1,\dots,c-1$, we have
\[
\underbrace{T_{a+1}-T_a}_{\ge\,0\text{ by \eqref{eq:Tmono}}}
=\varepsilon_a-\varepsilon_{a+1}
=p_{\,a-(c-s)}-p_{\,a-(c-s)+1}
=\underbrace{-\,u_{\,a-(c-s)}}_{\le\,0},
\]
so both sides must be equal to $0$. Thus, $T_1=T_2=\cdots=T_c$, hence all block gaps vanish inside the blocks, which is what we want. Also, we have $u_1=u_2=\cdots=u_{s-1}=0$, which gives us $p_1=0$ and then successively $p_2=p_3=\cdots =p_s=0$.

This proves our lemma.
\end{proof}

\begin{remark}
  For $m=0$ (that is, $n<c$) the empty residue classes force all class sums to
vanish, so $p\equiv0$ is the unique balanced path, matching
$\Raney_{c+1,r}(0)=1$.   
\end{remark}

Before moving on to the proof of Theorem \ref{thm:raney}, we need an important result called the \textit{cycle lemma} as stated by Dershowitz and Zaks \cite{DZ}, but originally due to Dvoretzky and Motzkin \cite{DM}.
\begin{lemma}\cite{DM, DZ}\label{lem:cyclic}
For any sequence $p_1, p_2, \ldots, p_{a+b}$ of $a$ boxes and $b$ circles, with $a\geq kb$, there exists exactly $a-kb$ (out of $a+b$) cyclic permutations $p_jp_{j+1}\cdots p_{a+b}p_1\cdots p_{j-1}$ with $1\leq j\leq a+b$ that are $k$-dominating. (A sequence $p_1, p_2, \ldots, p_{\ell}$ of boxes and circles is called $k$-dominating if for every position $i$, the number of boxes in $p_1p_2\cdots p_i$ is more than $k$ times the number of circles, with $1\leq i\leq \ell$.)
\end{lemma}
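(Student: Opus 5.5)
We prove the cycle lemma by reducing the word step by step, removing in each step one circle together with $k$ boxes. Give each box weight $+1$ and each circle weight $-k$. A word $w_1\cdots w_\ell$ is then $k$-dominating exactly when every prefix sum of weights is at least $1$. The total weight of any cyclic shift is $a-kb\ge 0$.

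The plan is an induction on $b$. Since the starting positions of the cyclic shifts correspond to the positions of the original cyclic word, we count the \emph{good} positions, i.e.\ those whose cyclic shift is $k$-dominating.

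\textbf{Base case $b=0$.} Every shift consists of boxes only, so every prefix sum is positive. All $a=a-k\cdot 0$ positions are good.

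\textbf{Inductive step $b\ge1$.} First we find a cyclically consecutive factor $F=\square^k\circ$, that is, a circle immediately preceded (cyclically) by $k$ boxes. Such a factor must exist. The circles cut the cyclic word into $b$ cyclic runs of boxes; assign each run to the circle that follows it. If every run had fewer than $k$ boxes, we would get $a\le b(k-1)<kb$, contradicting $a\ge kb$. So some circle has a run of at least $k$ boxes before it, and the last $k$ of those boxes together with the circle form $F$.

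Next we show that no position inside $F$ is good. Starting at the circle, the first prefix sum is $-k<1$. Starting at the $j$-th box of $F$, the prefix that ends at the circle of $F$ contains $k-j+1\le k$ boxes and one circle, so its sum is at most $0$.

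Then we delete $F$ and obtain a cyclic word $w'$ with $a-k$ boxes and $b-1$ circles. We claim that every position outside $F$ is good in $w$ if and only if it is good in $w'$. Fix such a starting position. The shift of $w$ either traverses $F$ completely or does not meet it, because $F$ is a cyclic factor that does not contain the start. Its sequence of prefix sums is that of the corresponding shift of $w'$, with a stretch inserted at the point where $F$ is traversed. That stretch adds $1,2,\dots,k$ to the current value and then returns to it, since $F$ has total weight $0$. So if the current value $v$ is at least $1$, all inserted values are at least $v\ge1$, and all later values are unchanged. Conversely, every prefix sum of $w'$ also occurs as a prefix sum of $w$. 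Hence positivity of all prefix sums is preserved in both directions.

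By induction, $w'$ has exactly $(a-k)-k(b-1)=a-kb$ good positions. These are exactly the good positions of $w$, which proves the lemma.

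\textbf{Main obstacle.} The delicate point is the invariance step. The cyclic wrap-around has to be handled correctly: we use that $F$ is a factor avoiding the start, so a shift meets it either entirely or not at all. We also need the inequality to be strict ("more than $k$ times"), since that is what makes the inner positions of $F$ fail. The edge case $a=kb$, where the count $a-kb$ is $0$, is covered automatically, because the final word is empty.
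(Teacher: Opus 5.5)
Your proof is correct. The paper gives no proof of this lemma; it quotes it from Dvoretzky--Motzkin and Dershowitz--Zaks and uses it only as a black box in the double count of pairs $(w,j)$. So there is no in-paper argument to compare routes with. Your argument is the standard deletion proof of the cycle lemma, as in Dershowitz--Zaks:
\begin{itemize}
\item induct on the number of circles;
\item locate a cyclic factor $F$ of $k$ boxes followed by a circle (guaranteed by $a\ge kb$);
\item check that none of its $k+1$ positions is good;
\item check that deleting $F$ does not change goodness of any other position.
\end{itemize}
It supplies exactly what the paper takes on citation and would make the note self-contained.

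Two points of presentation are worth tightening.

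First, the sentence that the shift ``either traverses $F$ completely or does not meet it'' is not literally right, since a full rotation always contains $F$. What you actually use is this: because the start lies outside $F$, the shift of $w$ has the form $PFQ$, where $PQ$ is the corresponding shift of $w'$ and $F$ is not split by the wrap-around.

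Second, the same hypothesis gives $P\neq\emptyset$, and you should say so explicitly. This is what makes the value $v$ reached just before $F$ an actual prefix sum of $PQ$, hence $v\ge 1$ whenever $PQ$ is $k$-dominating. It is exactly what fails when the start is the first box of $F$: there $P$ is empty, $v=0$, and the prefix ending at the circle of $F$ has weight $0$. With these two sentences made precise, the rest goes through as you wrote it:
\begin{itemize}
\item the inserted values $v+1,\dots,v+k,v$ are all at least $v$;
\item every prefix sum of $PQ$ reappears in $PFQ$;
\item the induction bookkeeping $(a-k)-k(b-1)=a-kb$ holds, including the edge cases $a=kb$ and $w'=\emptyset$.
\end{itemize}
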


\begin{proof}[Proof of Theorem~\ref{thm:raney}]
The height of the path at each block $B_i$ is denoted by $h_i$ for $1\leq i\leq m$. By Lemma~\ref{lem:structure}, balanced paths are in bijection with their block
heights $0\le h_1\le\cdots\le h_m$. Since the heights are constant on blocks
and nondecreasing, the inequality $p_i\le i-1$ is tightest at the first
column of each block, giving us
\[
h_t\;\le\;\col(t,1)-1\;=\;c(t-1)+s,\qquad t=1,\dots,m.
\]
Such sequences are in bijection with lattice paths from $(0,0)$ to $(m,\,cm+s)$ using $E$ and $N$ steps 
staying weakly below $y=cx+s$: Starting at $(0,0)$ we insert $h_{t}-h_{t-1}$ $N$ steps before the $t$-th $E$ step and then pad with $N$ steps after the last $E$ step.

We now want to count these paths using Lemma \ref{lem:cyclic}. Append one
extra $N$-step to each path and let $v$ be the \emph{reversal} of the
resulting $E$-$N$ word formed by the lattice path. Clearly, $v$ has $cm+s+1$ letters $N$ and $m$ letters $E$. We
regard the $N$-steps as boxes and the $E$-steps as circles, and apply
Lemma~\ref{lem:cyclic} with $k=c$.

We claim that a word $v$ is $c$-dominating if and only if it arises from a
valid path. Indeed, suppose a prefix of the original word $u$ (path plus
appended $N$) contains $x$ $E$-steps and $y$ $N$-steps, then the complement of the
suffix, which is a prefix of $v$ of length $L-(x+y)$ where $L=(c+1)m+s+1=(c+1)m+r$,
contains $m-x$ $E$-steps and $(cm+s+1)-y$ $N$-steps, and the $c$-dominance
condition for it is now
\[
(cm+s+1)-y \;>\; c\,(m-x),
\]
which after cancelling $cm$ becomes $y<cx+s+1$, that is $y\le cx+s$ since all
quantities are integers.

As the prefix of $u$ runs over all proper prefixes, its complementary suffix
runs over all nonempty prefixes of $v$. Hence $v$ is $c$-dominating if and
only if $y\le cx+s$ holds at every prefix of the original path. Moreover
every $c$-dominating word begins with a box (since a word beginning with a circle
violates dominance at its first position), that is, with the appended
$N$-step, so removing that $N$ and reversing recovers a path. Thus, the
correspondence between valid paths and $c$-dominating words is a bijection.

We now count the $c$-dominating words by a double counting argument. Consider the pairs
$(w,j)$, where $w$ is a word consisting of $cm+s+1$ boxes and $m$ circles,
$1\le j\le L$, and the $j$-th cyclic shift of $w$ is $c$-dominating. On the
one hand, by Lemma~\ref{lem:cyclic} (applied with $L$ letters, $cm+s+1$
boxes, $m$ circles and $k=c$), every word $w$ contributes exactly
\[
(cm+s+1)-cm\;=\;s+1\;=\;r
\]
such pairs, for a total of $r\dbinom{L}{m}$. On the other hand, every
$c$-dominating word $v$ arises from exactly $L$ pairs, since for each $j$
there is a unique word whose $j$-th cyclic shift equals $v$. Hence the number
of $c$-dominating words is
\[
\frac{r}{L}\binom{L}{m}
=\frac{r}{(c+1)m+r}\binom{(c+1)m+r}{m}
=\Raney_{c+1,r}(m),
\]
which completes the proof.
\end{proof}

\end{document}